\documentclass[11pt]{article} 
\usepackage{amsmath,amsthm} 
\usepackage{amssymb,latexsym}
\usepackage[T1]{fontenc}
\usepackage{authblk}
\usepackage{lipsum}
\usepackage{hyperref}
\usepackage{lipsum} 
\usepackage{lineno}
\usepackage{upgreek}
\usepackage{tikz}
\usetikzlibrary{shapes.geometric}
\usetikzlibrary{positioning}
\usetikzlibrary{arrows.meta}
\modulolinenumbers[1]
\begin{document}
	\newtheorem{theorem}{Theorem}[section]
	\newtheorem{question}{Question}[section]
	\newtheorem{thm}[theorem]{Theorem}
	\newtheorem{lem}[theorem]{Lemma}
	\newtheorem{eg}[theorem]{Example}
	\newtheorem{prop}[theorem]{Proposition}
	\newtheorem{cor}[theorem]{Corollary}
	\newtheorem{rem}[theorem]{Remark}
	\newtheorem{deff}[theorem]{Definition}
	\numberwithin{equation}{section}
	
	\title{On $Z$-Noetherian Rings}
	
	\author[a]{Ambar Dubey}
	
	\author[b]{Tushar Singh}
	
	\author[a]{Shiv Datt Kumar}

	\affil[a]{\small Department of Mathematics, Motilal Nehru National Institute of Technology Allahabad, Prayagraj 211004, India \vskip0.01in Emails: iiitabr11196@gmail.com, sdt@mnnit.ac.in}
	\vskip0.05in
	\affil[b]{\small Department of Applied Sciences (Mathematics), National Institute of Technology Delhi, Plot No. FA7, Zone P1, GT Karnal Road, Delhi-110036, India \vskip0.01in Emails: sjstusharsingh0019@gmail.com}
	\maketitle
	\hrule
	
	\begin{abstract}
		In this paper, introduce a new finiteness condition for commutative rings, called $Z$-Noetherian, under which every ideal contained in the set of zero-divisors $Z(R)$ is finitely generated. This condition lies properly between Noetherian and $Nil_{*}$-Noetherian. We establish intrinsic characterizations of $Z$-Noetherian rings and investigate their stability under nilpotent quotients and localizations. Under the assumption that $Z(R)$ is an ideal, we obtain a primary decomposition theorem for ideals contained in $Z(R)$. For $ZD$-rings, we prove a Cohen-type criterion characterizing $Z$-Noetherian in terms of the finite generation of prime ideals contained in $Z(R)$. We further establish $Z$-Noetherian analogues of the Hilbert Basis Theorem and Krull's Intersection Theorem. Finally, we establish a principal ideal theorem for $Z$-Noetherian rings.  Collectively, these results show that several fundamental finiteness and  structural properties of Noetherian rings extend to the broader class of  $Z$-Noetherian rings.
	\end{abstract}
	
	\textbf{Keywords:} Noetherian ring, Nil$_{*}$-Noetherian ring, $Z$-Noetherian ring.\\
	\textbf{MSC(2020):} 16N40, 13P05, 16P40.
	\hrule
	\noindent
	\section{Introduction}
	\noindent
	Throughout this paper, all rings are assumed to be commutative with identity, and all modules are unitary. We denote by $Z(R)$ the set of zero-divisors of $R$, Nil$(R)$ the nilradical of $R$, and  $R[X]$ the polynomial ring over $R$. Noetherian rings \cite{ne21} play a fundamental role in commutative algebra, primarily because of their underlying finiteness condition that every ideal is finitely generated. This finiteness property is essential in many important developments of the subject and provides the framework for several classical results, including Hilbert's Basis Theorem, the Lasker-Noether Theorem on primary decomposition, Krull's Intersection Theorem, Cohen's Theorem, and Krull's Principal Ideal Theorem. A natural direction in commutative algebra is to investigate whether these finiteness properties and their associated structural results remain valid when the Noetherian condition is suitably weakened. This has led to the introduction and study of various classes of generalized Noetherian rings, including Nonnil-Noetherian rings \cite{ab03}, $S$-Noetherian rings \cite{ad02}, $J$-Noetherian rings \cite{ka24}, $S$-$J$-Noetherian rings \cite{ts26}, $Nil_{*}$-Noetherian rings \cite{zx22}, and, more recently, $Nil_{*}$-$S$-Noetherian rings \cite{ts25}. For a detailed account of these and other generalizations of Noetherian rings, we refer the reader to the survey article \cite{ts23}. These generalized classes provide broader settings in which various finiteness conditions and classical results of Noetherian rings can be investigated and extended. A ring $R$ is called Nil$_{*}$-Noetherian if every nil ideal of $R$ is finitely generated (see \cite[Definition 1.1]{zx22}. Although this class properly contains Noetherian rings, many important finiteness properties continue to hold. Since every nil ideal is contained in the set of zero-divisors $Z(R)$, it is natural to ask whether requiring finite generation only for ideals contained in $Z(R)$ is sufficient to recover further structural properties of Noetherian rings. Motivated by this observation, we introduce the notion of a $Z$-\textit{Noetherian ring}. A commutative ring $R$ is said to be $Z$-Noetherian if every ideal contained in $Z(R)$ is finitely generated (see Definition \ref{z1}). Clearly,
	\[
	\text{Noetherian} \Longrightarrow \text{$Z$-Noetherian} \Longrightarrow \text{Nil}^{*}\text{-Noetherian},
	\]
	and neither implication is reversible (see Example \ref{z2}) . Thus, $Z$-Noetherian rings constitute a natural intermediate class between Noetherian rings and $Nil_{*}$-Noetherian rings. The main objective of this paper is to investigate how far the classical theory of Noetherian rings extends to the setting of $Z$-Noetherian rings. We establish several equivalent characterizations of $Z$-Noetherian rings and investigate their behavior under quotients and localizations, along with various permanence properties. Furthermore, we develop a primary decomposition theory for ideals contained in $Z(R)$ (see Theorem \ref{sp}) and establish a Cohen-type characterization for $Z$-Noetherian $ZD$-rings (see Theorem \ref{z3}). Furthermore, we prove analogues of several classical theorems, including Hilbert's Basis Theorem (see Theorem \ref{z4}), Krull's Intersection Theorem (see Theorem \ref{z5}), and Krull's Principal Ideal Theorem (see Theorem \ref{z6}), under suitable assumptions. These results demonstrate that many fundamental finiteness properties of commutative algebra continue to hold when finite generation is assumed only for ideals contained in the set of zero-divisors.
	
	\section{Main Results}
	We begin by introducing the following new class of Noetherian rings.
	\begin{deff}\label{z1}
		A ring $R$ is called $Z$-Noetherian if every ideal $I\subseteq Z(R)$ is finitely generated.
	\end{deff}
	\noindent
	\begin{rem}
		When $Z(R)=Nil(R)$, the notions of $Z$-Noetherian and Nil$_{*}$-Noetherian ring same. Obviously, $Nil(R)\subseteq Z(R)$ but $Z(R)\nsubseteq Nil(R)$ (in general) for this case $Z$-Noetherian ring is different class of ring which lies between Noetherian and  Nil$_{*}$-Noetherian ring class. For more details, we illustrate an example of ring which is $Nil_{*}$-Noetherian ring but not a $Z$-Noetherian as follows:
	\end{rem}
	\begin{eg}\label{z2}
		Let $R=\prod\limits_{i\in \mathbb{N}} \mathbb{Z}_{2}$. Then $Z(R)=\{(a_i)\in R \mid \text{there exists} i \text{ such that} a_i=0\}$, and Nil$(R)=(0)$. Thus $R$ is a Nil$_{*}$-Noetherian ring. Let $I=\{(a_1,a_2,\dots)\in R \mid a_i=0  ~for ~all ~but ~finitely ~many~  i\}$. Then $I$ is an ideal of $R$ with $I\subseteq Z(R)$, but $I$ is not finitely generated. Hence $R$ is not a $Z$-Noetherian ring.
	\end{eg}
	We next investigate the behavior of $Z$-Noetherian rings under quotients.
	\begin{prop}
		Let $R$ be a $Z$-Noetherian ring and let $I$ be a nil ideal of $R$. Then $R/I$ is $Z$-Noetherian.
	\end{prop}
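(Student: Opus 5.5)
The plan is to reduce finite generation of an ideal of $R/I$ lying inside $Z(R/I)$ to finite generation of an ideal of $R$ lying inside $Z(R)$, and then to invoke Definition \ref{z1} for $R$. Every ideal of $R/I$ has the form $J/I$ with $I\subseteq J$. If $J$ is finitely generated, say $J=(a_1,\dots,a_m)$, then $J/I$ is generated by $a_1+I,\dots,a_m+I$. So it is enough to show that $J\subseteq Z(R)$ whenever $J/I\subseteq Z(R/I)$, that is, $Z(R/I)\subseteq Z(R)/I$ when $I$ is nil.

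Fix $x\in J$, so $x+I\in Z(R/I)$. Choose $y\notin I$ with $xy\in I$. Since $I$ is nil, there is $n$ with $x^ny^n=(xy)^n=0$.

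\emph{Case 1: $y^n\neq 0$.} Let $k\ge 1$ be minimal with $x^ky^n=0$. Then $z=x^{k-1}y^n\neq 0$ and $xz=0$, so $x\in Z(R)$.

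\emph{Case 2: $y$ is nilpotent.} This is the step I expect to be the main obstacle, and the naive inclusion can genuinely fail here. In $R=k[x,y]/(y^2)$ with $I=(xy)$ (a nil ideal), the class $x+I$ is a zero-divisor of $R/I$ because $(x+I)(y+I)=0$ and $y\notin I$. Yet $x$ is a non-zero-divisor of $R$. So $J\subseteq Z(R)$ is not automatic, and the argument must exploit the $Z$-Noetherian hypothesis more seriously.

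What I would try first in Case 2 is the following. Since $\mathrm{Nil}(R)\subseteq Z(R)$, the nilradical is finitely generated, hence nilpotent: $\mathrm{Nil}(R)^t=0$ for some $t$. In particular $I\subseteq \mathrm{Nil}(R)$ is finitely generated. I would then split $J$ along the subset $J\cap Z(R)$ and the part of $J$ consisting of elements whose annihilator in $R/I$ comes only from nilpotents. For the latter part I would try to show it is contained in $(I:\mathrm{Nil}(R))$. Then I would control $J/I$ by a finite filtration
\[
I\subseteq J\cap (I:N)\subseteq \cdots,
\]
with $N=\mathrm{Nil}(R)$, exploiting that $N$ and $I$ are finitely generated and $N^t=0$. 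If this filtration does not close up, I would look for a counterexample. Natural candidates are rings with a finitely generated nilpotent nilradical for which $R/I$ acquires new non-finitely-generated zero-divisor ideals. If such an example exists, the natural repair is to strengthen the hypothesis to $Z(R/I)=Z(R)/I$, or to $I=\mathrm{Nil}(R)$ with an additional condition. Under that hypothesis Case 1 alone already completes the proof.
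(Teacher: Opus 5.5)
Your Case 1 is correct, and so is your example $R=k[x,y]/(y^2)$, $I=(xy)$. The ideal $(x+I)$ of $R/I$ is annihilated by $y+I\neq 0$, yet $x$ is regular in $R$. This is exactly where the paper's own proof breaks. It pulls $J\subseteq Z(R/I)$ back to $J'=\varphi^{-1}(J)$ and asserts $J'\subseteq Z(R)$ because ``$b\notin I$, hence $b^n\neq 0$,'' and your Case 2 refutes that step.

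However, your proposal does not prove the statement. Case 2 is left open. The filtration via $(I:\mathrm{Nil}(R))$ is only a sketch, and ``splitting $J$ along $J\cap Z(R)$'' is not a decomposition into ideals, since $Z(R)$ need not be an ideal. You also end by suggesting the proposition may need a stronger hypothesis; it does not, because the statement is true as stated. What your example really shows is that no argument of the form ``$\varphi^{-1}(J)\subseteq Z(R)$'' can work. So one must finitely generate ideals \emph{not} contained in $Z(R)$, using only ideals that are.

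The missing idea is that a single nonzero zero-divisor already makes a $Z$-Noetherian ring Noetherian. You were close when you observed that $\mathrm{Nil}(R)$ is finitely generated; all you need from $I$ is one nonzero element. If $I=0$ there is nothing to prove. Otherwise:
\begin{enumerate}
\item Pick $0\neq n\in I$. Since $n$ is nilpotent, $n\in Z(R)$, so $Rn\subseteq Z(R)$. Also $\operatorname{Ann}_R(n)\subseteq Z(R)$ because $n\neq 0$.
\item Let $J$ be any ideal of $R$. The ideal $Jn=\{jn : j\in J\}$ lies in $Rn\subseteq Z(R)$, so it is finitely generated, say $Jn=(j_1n,\dots,j_kn)$ with $j_i\in J$.
\item For $j\in J$ write $jn=\sum_i r_ij_in$. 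Then $j-\sum_i r_ij_i\in J\cap\operatorname{Ann}_R(n)$, which is an ideal contained in $Z(R)$ and hence finitely generated.
\item Therefore $J=(j_1,\dots,j_k)+(J\cap\operatorname{Ann}_R(n))$ is finitely generated.
\end{enumerate}
So $R$ is Noetherian, hence $R/I$ is Noetherian and in particular $Z$-Noetherian. The same argument shows that every $Z$-Noetherian ring is either a domain or Noetherian. That is why no extra hypothesis such as $Z(R/I)=Z(R)/I$ is needed.
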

	\begin{proof}
		Let $\varphi : R \longrightarrow R/I$ be the natural ring homomorphism defined by $\varphi(a)=a+I$. Let $J$ be an ideal of $R/I$ such that $J \subseteq Z(R/I)$. Set
		$J'=\varphi^{-1}(J)$. Then $J'$ is an ideal of $R$. We claim that $J' \subseteq Z(R)$. Indeed, let $a\in J'$. Then $a+I\in J\subseteq Z(R/I)$, so there exists $b+I\in R\setminus I$ such that $(a+I)(b+I)=I$. Hence $ab\in I$.
		Since $I$ is a nil ideal, there exists $n\ge 1$ such that
		$(ab)^n=0$. Since $b\notin I$, and hence $b^n\neq 0$ for any $n\in\mathbb{N}$. Therefore $a^n
		\in  Z(R)$, which implies that $a\in Z(R)$. Thus $J'\subseteq Z(R)$. Since $R$ is a $Z$-Noetherian, $J'$ is finitely generated. Finally, since
		$J=\varphi(J')$, it follows that $J$ is finitely generated.
		Therefore $R/I$ is $Z$--Noetherian.
	\end{proof}

	\begin{prop} \label{n}
		Let $R$ be a Noetherian ring, and let $I$ be an ideal of $R$ such that $I \subseteq Z(R)$. Then $\operatorname{Ann}_R(I) \neq (0)$.
	\end{prop}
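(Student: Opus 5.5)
The plan is to use the standard structure theory of zero-divisors in a Noetherian ring. Since $R$ is Noetherian, the zero ideal has a primary decomposition, and the set of zero-divisors is the union of the associated primes:
\[
Z(R)=\bigcup_{P\in \operatorname{Ass}_R(R)} P ,
\]
where $\operatorname{Ass}_R(R)=\{P_1,\dots,P_n\}$ is finite. Each $P_i$ is of the form $\operatorname{Ann}_R(x_i)$ for some nonzero $x_i\in R$.

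The key steps, in order, are as follows.

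\textbf{Step 1 (prime avoidance).} Since $I\subseteq Z(R)=P_1\cup\cdots\cup P_n$ and the union is finite, prime avoidance gives $I\subseteq P_j$ for some $j$. No hypothesis on infinite residue fields is needed, because all the $P_i$ are prime.

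\textbf{Step 2 (producing an annihilator).} Write $P_j=\operatorname{Ann}_R(x_j)$ with $x_j\neq 0$. Then $I x_j\subseteq P_j x_j=0$, so $0\neq x_j\in\operatorname{Ann}_R(I)$. Hence $\operatorname{Ann}_R(I)\neq (0)$.

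The only substantive input is the fact that in a Noetherian ring every associated prime is an annihilator of a single element and $Z(R)$ is the finite union of these primes. This is classical and follows from the existence of a maximal element among annihilators of nonzero elements, which is prime. I would cite this as standard, for example from Atiyah--Macdonald or Matsumura, rather than reprove it.

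The only potential obstacle is Step 1. For an arbitrary subset of a finite union of primes the conclusion fails, so it matters that $I$ is an ideal; this is exactly the hypothesis of prime avoidance. Since $R$ is commutative with identity and $I$ is an ideal, the lemma applies directly and the argument closes.
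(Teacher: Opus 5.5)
Your proposal is correct and follows essentially the same route as the paper. Both write $Z(R)$ as the finite union of the associated primes $\operatorname{Ann}_R(m)$ with $m\neq 0$, use prime avoidance to place $I$ inside one of them, and conclude that the corresponding nonzero $m$ lies in $\operatorname{Ann}_R(I)$.
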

	\begin{proof}
		\noindent
		Suppose $R$ is Noetherian, the set of zero divisors of $R$ is the union of the associated primes of the $R$-module $R$: $Z(R) = \bigcup_{\mathfrak{p} \in \operatorname{Ass}_R(R)} \mathfrak{p}$, by \cite[Theorem 4.7]{fm69}. Also $\operatorname{Ass}_R(R)$ is finite, and each $\mathfrak{p} \in \operatorname{Ass}_R(R)$ \[
		\mathfrak{p} = \operatorname{Ann}(m) = \{ r \in R : rm = 0 \}, \quad m \in R,\ m \neq 0\]
		Since $I\subseteq Z(R)$, we have $I\subseteq \bigcup_{\mathfrak{p} \in \operatorname{Ass}_R(R)} \mathfrak{p}$,  by \cite[Proposition 1.11]{fm69}, $I\subseteq \mathfrak{p}$ for some $\mathfrak{p}\in \operatorname{Ass}_R(R)$. Since $I\subseteq \mathfrak{p}$ so
		$am = 0$ for all $a \in I$. Hence $m\in \operatorname{Ann}_R(I)$ and $m \neq 0$, so $\operatorname{Ann}_R(I) \neq(0) $.
	\end{proof}
	\noindent
	\textbf{Counterexample of \ref{n}  (In the non-Noetherian case).}
	Let $R = k[x_1, x_2, x_3, \dots]/(x_1^2, x_2^2, x_3^2, \dots)$, where $k$ is a field.  Let $I = (x_1, x_2, x_3, \dots)$, which is not finitely generated. Clearly, $I \subseteq Z(R)$. We show that $\operatorname{Ann}_R(I) = (0)$. Let $r \in R$ such that $rI = 0$. Then $r x_i = 0$ for all $i \ge 1$. Since $R$ is a quotient of a polynomial ring by $(x_i^2)$, every element $r \in R$ can be written as a finite $k$-linear combination of square-free monomials:\[
	r x_i = a_0 x_i + \sum_{\substack{j \\ x_i \notin \mathfrak{m}_j}} a_j \mathfrak{m}_j x_i
	\] and each $m_j$ is a product of distinct variables. Fix $i$. Then multiplying by $x_i$, all monomials $m_j$ not involving $x_i$ give nonzero terms $m_j x_i$, while those involving $x_i$ vanish . Thus, $r x_i = a_0 x_i + \sum_{\substack{j \\ x_i \notin m_j}} a_j\, m_j x_i$. Since $r x_i = 0$ and the monomials in $R$ are linearly independent over $k$ therefore  $a_0 = 0$ and  $a_j = 0$. Hence $r = 0$, and therefore $\operatorname{Ann}_R(I) = (0)$. This shows that the conclusion of the lemma fails in the non-Noetherian case.\\
	
	Next, we establish several equivalent characterizations of $Z$-Noetherian rings, analogous to the classical characterizations of Noetherian rings.
	
	\begin{theorem}\label{j1}
		Let $R$ be a commutative ring. The following are equivalent:
		\begin{enumerate}
			\item $R$ is $Z$-Noetherian.
			
			\item Every ideal $I\subseteq Z(R)$ is finitely generated.
			
			\item The set of ideals contained in $Z(R)$ satisfies the ascending chain condition.
			
			\item Every nonempty set of ideals contained in $Z(R)$ has a maximal element.
		\end{enumerate}
	\end{theorem}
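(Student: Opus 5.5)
The plan is the standard cycle $(1)\Leftrightarrow(2)\Rightarrow(3)\Rightarrow(4)\Rightarrow(2)$, adapted so that every ideal we build stays inside $Z(R)$. The equivalence $(1)\Leftrightarrow(2)$ is just Definition \ref{z1}, so the work is in the remaining three implications.

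For $(2)\Rightarrow(3)$, take an ascending chain $I_1\subseteq I_2\subseteq\cdots$ of ideals, each contained in $Z(R)$, and put $I=\bigcup_n I_n$. Because the chain is ascending, $I$ is an ideal. It lies in $Z(R)$ because each element of $I$ belongs to some $I_n\subseteq Z(R)$; this is the only place the restricted setting matters, and it causes no difficulty since $Z(R)$ is a union of sets and need not be an ideal. By (2), $I=(a_1,\dots,a_k)$. Each $a_j$ lies in some $I_{n_j}$, so with $N=\max_j n_j$ we get $I\subseteq I_N$. Hence $I_n=I_N$ for all $n\ge N$.

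For $(3)\Rightarrow(4)$, suppose a nonempty family $\mathcal{F}$ of ideals contained in $Z(R)$ has no maximal element. Using the axiom of dependent choice, pick $I_1\in\mathcal{F}$, and at each stage pick $I_{n+1}\in\mathcal{F}$ with $I_n\subsetneq I_{n+1}$. This gives a strictly ascending chain of ideals inside $Z(R)$, which contradicts (3).

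For $(4)\Rightarrow(2)$, let $I\subseteq Z(R)$ be an ideal and let $\mathcal{F}$ be the set of finitely generated ideals contained in $I$. Each member of $\mathcal{F}$ lies in $Z(R)$, and $\mathcal{F}$ is nonempty because $(0)\in\mathcal{F}$ (note $0\in Z(R)$ under the usual convention). By (4), $\mathcal{F}$ has a maximal element $J$. If $J\neq I$, choose $a\in I\setminus J$. Then $J+Ra$ is finitely generated, contained in $I\subseteq Z(R)$, and strictly larger than $J$, which contradicts maximality. So $I=J$ is finitely generated.

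None of these steps is a real obstacle. The only points to check carefully are that the union in $(2)\Rightarrow(3)$ and the enlarged ideal $J+Ra$ in $(4)\Rightarrow(2)$ remain inside $Z(R)$. Both follow immediately because each is contained in an ideal already known to lie in $Z(R)$.
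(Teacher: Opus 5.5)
Your proof is correct and matches the paper's argument for $(1)\Leftrightarrow(2)$, $(2)\Rightarrow(3)$ (finite generation of the union of the chain) and $(3)\Rightarrow(4)$ (building a strictly ascending chain from a family with no maximal element). The only difference is how you close the loop: you prove $(4)\Rightarrow(2)$ directly by taking a maximal finitely generated subideal of $I$. The paper instead proves $(4)\Rightarrow(3)$ and then $(3)\Rightarrow(2)$ via a strictly ascending chain $(a_1)\subsetneq(a_1,a_2)\subsetneq\cdots$ inside $I$, so your cycle is one implication shorter but is otherwise the same standard argument.
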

	\begin{proof}
		\noindent
		$(1)\Leftrightarrow(2)$ follows directly from the definition.\\
		$(2)\Rightarrow(3)$. Let $I_1 \subseteq I_2 \subseteq I_3 \subseteq \cdots$ be an ascending chain of ideals of $R$ such each $I_{i\in\mathbf{N}}$ contained in $Z(R)$. Set $I=\bigcup\limits_{j\ge 1} I_j$. Clearly $I\subseteq Z(R)$ and $I$ is an ideal of $R$. By assumption, $I$ is finitely generated, say $I=(a_1,a_2,\dots,a_m)$. Each generator $a_r$ belongs to some $I_{N_r}$. Let $N=\max\{N_1,\dots,N_m\}$.
		Then $a_1,\dots,a_m\in I_N$, hence $I\subseteq I_N$.
		Since always $I_N\subseteq I$, we obtain $I=I_N$. Therefore the chain stabilizes, and the ascending chain condition holds.\\
		$(3)\Rightarrow(4)$: Assume that ACC holds for ideals contained in $Z(R)$. Let $\mathcal{F}$ be a nonempty family of ideals with each $I\in\mathcal{F}$ satisfying $I\subseteq Z(R)$. Suppose that $\mathcal{F}$ has no maximal element.Then for every $I_j\in\mathcal{F}$ there exist $I_{j+1}\in\mathcal{F}$ with $I_j \subsetneq I_{j+1}$. Then we can construct a strictly increasing chain $I_1 \subsetneq I_2 \subsetneq I_3 \subsetneq \cdots$ with each $I_j\in\mathcal{F}$, contradicting ACC. Hence $\mathcal{F}$ must have a maximal element.\\
		$(4)\Rightarrow(3)$: 
		Assume that every nonempty family of ideals contained in $Z(R)$ has a maximal element. If ACC failed, then there would exist a strictly increasing chain $I_1 \subsetneq I_2 \subsetneq I_3 \subsetneq \cdots$ of ideals contained in $Z(R)$. Then the family
		$\{ I_n \mid n\in\mathbb{N} \}$ would have no maximal element, contradicting (4). Therefore ACC holds for ideals contained in $Z(R)$.\\
		$(3)\Rightarrow(2)$: Assume that ACC holds for ideals contained in $Z(R)$. Let $I\subseteq Z(R)$ be an ideal. Suppose that $I$ is not finitely generated. Then we can choose elements $a_1,a_2,a_3,\dots \in I$ such that $(a_1)\subsetneq (a_1,a_2)\subsetneq (a_1,a_2,a_3)\subsetneq \cdots$. This gives a strictly ascending chain of ideals contained in $Z(R)$, contradicting ACC. Therefore $I$ must be finitely generated.
	\end{proof}
	\noindent
	Recall from \cite{fm69} that a proper ideal $I$ of a ring $R$ is said to be \textit{primary} if, whenever $ab \in I$, either $a \in I$ or $b^n \in I$ for some $n > 0$. An ideal $I$ of $R$ is said to be \textit{irreducible} if
	$I = K \cap L \implies I = K$ or $I = L$ for some ideal $K, L$ of $R$. We next extend the primary decomposition theorem to the setting of $Z$-Noetherian rings. We begin with the following lemma.
	
	\begin{lem}\label{j2}
		Let $R$ be a $Z$ Noetherian ring, and let $I$ be an ideal of $R$ such that $I\subseteq Z(R)$. If $I$ is an irreducible ideal of $R$, then $I$ is a primary ideal.
	\end{lem}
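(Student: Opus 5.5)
The plan is to adapt the classical Noetherian argument: take $ab\in I$ with $a\notin I$, and show that $b^n\in I$ for some $n$. The classical route uses the ascending chain of colon ideals
\[
(I:b)\subseteq (I:b^2)\subseteq (I:b^3)\subseteq\cdots .
\]
Once it stabilizes at some $n$, one proves
\[
I=(I+(b^n))\cap (I+(a)).
\]
Indeed, if $y=i+rb^n\in I+(a)$, then $yb\in I$, because $ab\in I$ and so $(I+(a))b\subseteq I$. Hence $rb^{n+1}\in I$, so $r\in (I:b^{n+1})=(I:b^n)$, and therefore $rb^n\in I$ and $y\in I$. Irreducibility of $I$ then gives $I=I+(b^n)$ or $I=I+(a)$. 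Since $a\notin I$, the first holds, so $b^n\in I$ and $I$ is primary. The only place the $Z$-Noetherian hypothesis is needed is the stabilization, which by Theorem \ref{j1}(3) requires every $(I:b^k)$ to lie in $Z(R)$. So the proof reduces to controlling these colon ideals, and I would split into cases on $b$.

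\emph{Case 1: $b\notin Z(R)$.} Let $x\in (I:b^k)$. Then $xb^k\in I\subseteq Z(R)$, so $xb^ky=0$ for some $y\neq 0$. Since $b$ is regular, $b^ky\neq 0$, and hence $x\in Z(R)$. Thus every $(I:b^k)\subseteq Z(R)$. By Theorem \ref{j1} the chain stabilizes, and the argument above gives $b^n\in I\subseteq Z(R)$. This contradicts the regularity of $b$. So in this case the assumption $a\notin I$ is impossible, i.e.\ $a\in I$, which is what primary requires.

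\emph{Case 2: $b\in Z(R)$.} This is the main obstacle, since $(I:b^k)$ may now contain regular elements. My first attempt is to show directly that $(I:b^k)\subseteq Z(R)$ for all $k$; then the stabilization argument applies verbatim and yields $b^n\in I$. If some $(I:b^k)$ contains a regular element $x$, then $xb^k\in I$. I would try to use $x$ to manufacture a decomposition of $I$ that irreducibility forbids. A candidate is to compare $I$ with $(I+(b^k))\cap (I+(x))$, or to replace the colon chain by the chain $(I:b^k)\cap (I:x)$.

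A second option is to symmetrize the roles of $a$ and $b$. If $a\notin Z(R)$, then Case 1 with $a$ and $b$ interchanged gives $b\in I$, and we are done. So one may assume both $a,b\in Z(R)$. One then works with the ideals $I+(a)\subseteq Z(R)$, which is what must be verified, and the colon ideals taken inside that ideal, so that ACC from Theorem \ref{j1} applies. I expect the verification that the relevant ideals stay inside $Z(R)$ in this case to be the delicate point. If it fails in general, it would indicate that an extra hypothesis, such as $Z(R)$ being an ideal, is needed here.
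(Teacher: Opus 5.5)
Your setup, the stabilization argument, and Case~1 are correct. However, Case~2 ($b\in Z(R)$), which is where the lemma has its content, is left unresolved: you list candidate decompositions and suggest that an extra hypothesis such as $Z(R)$ being an ideal might be needed. No such hypothesis is needed, and the missing step is already contained in your Case~1. That argument really proves a general fact: \emph{if $y$ is regular and $cy\in I$, then $c\in I$}. To see this, note that each $(I:y^j)\subseteq Z(R)$, exactly as you showed for regular $b$, so the chain $(I:y^j)$ stabilizes at some $l$. The same computation then gives $I=(I+(y^l))\cap(I+(c))$. Since $y^l$ is regular, $y^l\notin I\subseteq Z(R)$, so irreducibility forces $c\in I$.

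With this fact, Case~2 is immediate. If some $(I:b^k)$ contains a regular element $x$, then $b^k x\in I$, and the fact (with $y=x$, $c=b^k$) gives $b^k\in I$, which is exactly what primary requires. Otherwise every $(I:b^k)\subseteq Z(R)$, and your stabilization argument yields $b^n\in I$. So the right case split is on whether the colon ideals $(I:b^k)$ contain a regular element, not on whether $b$ is regular.

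This is precisely the paper's proof, with the roles of $a$ and $b$ swapped. Its Case~2 takes a regular $y\in(I:a^n)$, stabilizes the chain $(I:y^k)$ at $l$, and uses $I=(I+(y^l))\cap(I+(a^n))$ together with $y^l\notin I$. Your candidate $(I+(b^k))\cap(I+(x))$ was the right idea. To prove directly that the intersection equals $I$, you need to replace $x$ by the power $x^l$ at which $(I:x^j)$ stabilizes.
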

	\begin{proof}
		\noindent
		To prove that $I$ is primary, we must show that if $ab \in I$ and $b \notin I$, then there exists an integer $n \geq 1$ such that $a^n \in I$. Consider the colon ideal $(I : a^n) = \{\, r \in R \mid ra^{n}\in I \,\}$. Since $R$ is a $Z$-Noetherian ring, any ascending chain of ideals contained in $Z(R)$ stabilizes, by Theorem \ref{j1}. We divide the proof into two cases.
		\medskip
		
		\noindent\textbf{Case 1:} $(I : a^n) \subseteq Z(R)$ for all $n \geq 1$. Consider the ascending chain $(I : a) \subseteq (I : a^2) \subseteq (I : a^3) \subseteq \cdots$. By assumption, all these ideals are contained in $Z(R)$, so the $Z$-Noetherian property implies that the chain stabilizes, by Theorem \ref{j1}. Hence, there exists $N \geq 1$ such that $(I : a^N) = (I : a^{N+1})$. We claim that $I = (I + (a^n)) \cap (I + (b))$. The inclusion $I \subseteq (I + (a^n)) \cap (I + (b))$ is clear. Conversely, let $x \in (I + (a^n)) \cap (I + (b))$. Then $x = i_1 + r a^n= i_2 + s b$ for some $i_1, i_2 \in I$ and $r, s \in R$. Multiplying by $a$, we get $ax = a i_1 + r a^{n+1} = a i_2 + s ab$. Since $i_1, i_2 \in I$ and $ab \in I$, the right-hand side lies in $I$. Hence $ax \in I$. Thus $a i_1 + r a^{n+1} \in I$. Since $a i_1 \in I$, it follows that $r a^{n+1} \in I$, so $r \in (I : a^{n+1}) = (I : a^n)$. Hence $ra^n \in I$, and therefore $x = i_1 + r a^n \in I$. Thus $(I + (a^n)) \cap (I + (b)) \subseteq I$, and equality holds. Since $I$ is irreducible, it follows that either $I = I + (a^n)$ or $I = I + (b)$. As $b \notin I$, we must have $I = I + (a^n)$, which implies $a^n \in I$, as desired.
		\medskip
		
		\noindent \textbf{Case 2:} $(I : a^n) \nsubseteq Z(R)$ for some $n \geq 1$. Then there exists $y \in (I : a^n)$ such that $y \notin Z(R)$. Hence $y$ is a regular element. Since $y \in (I : a^n)$, we have $y a^n \in I$. Consider the chain $(I : y) \subseteq (I : y^2) \subseteq (I : y^3) \subseteq \cdots$. Let $z \in (I : y^k)$. Then $z y^k \in I \subseteq Z(R)$. Since $y$ is regular, so is $y^k$, and hence $z$ must be a zero-divisor. Thus $(I : y^k) \subseteq Z(R)$ for all $k \geq 1$. By the $Z$-Noetherian property, the chain stabilizes, by Theorem \ref{j1}. So there exists $l\geq 1$ such that $(I : y^l) = (I : y^{l+1})$. We claim that $I = (I + (y^l)) \cap (I + (a^n))$. Let $x \in (I + (y^l)) \cap (I + (a^n))$. Then $x = j_1 + u y^l= j_2 + v a^n$ for some $j_1, j_2 \in I$ and $u, v \in R$. Multiplying by $y$, we get $yx = y j_1 + u y^{l+1} = y j_2 + v y a^n$. Since $y a^n \in I$ and $j_2 \in I$, the right-hand side lies in $I$, so $yx \in I$. Thus $y j_1 + u y^{l+1} \in I$. Since $y j_1 \in I$, we get $u y^{l+1} \in I$, hence $u \in (I : y^{l+1}) = (I : y^l)$. Therefore $u y^l \in I$, and so $x = j_1 + u y^l\in I$. Thus $(I + (y^M)) \cap (I + (a^n)) = I$. Since $y$ is regular, $y^l$ is also regular, so $y^M \notin I$ (because $I \subseteq Z(R)$). Hence $I \neq I + (y^l)$. Since $I$ is irreducible, it follows that $I = I + (a^n)$, so $a^n \in I$. Therefore, in both cases, $a^n \in I$ for some $n \geq 1$, and hence $I$ is primary.
	\end{proof}
	
	We are now in a position to establish the following primary decomposition theorem for $Z$-Noetherian rings.
	
	\begin{theorem}\label{sp}
		\textbf{(Existence of primary decomposition in $Z$-Noetherian rings)}
		Let $R$ be a $Z$-Noetherian ring such that $Z(R)$ is an ideal of $R$. Then every proper ideal $I$ of $R$ contained in $Z(R)$ can be expressed as a finite intersection of primary ideals.
	\end{theorem}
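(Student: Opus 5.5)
The plan is to imitate the classical Noetherian argument: first show that every ideal contained in $Z(R)$ is a finite intersection of irreducible ideals, then apply Lemma \ref{j2}. The hypothesis that $Z(R)$ is an ideal is exactly what keeps every ideal in the argument inside $Z(R)$.

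\textbf{Step 1.} Suppose the conclusion fails. Let $\mathcal{F}$ be the family of proper ideals $I\subseteq Z(R)$ that cannot be written as a finite intersection of irreducible ideals, and assume $\mathcal{F}\neq\emptyset$. By Theorem \ref{j1}(4), $\mathcal{F}$ has a maximal element $I$.

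\textbf{Step 2.} Since $I\in\mathcal{F}$, the ideal $I$ is not itself irreducible, so $I=K\cap L$ with $I\subsetneq K$ and $I\subsetneq L$. We need $K$ and $L$ to lie in $Z(R)$, and this is the main obstacle. The fix is to replace $K$ and $L$ by $K'=K\cap Z(R)$ and $L'=L\cap Z(R)$. Because $Z(R)$ is an ideal, $K'$ and $L'$ are ideals contained in $Z(R)$. Since $I\subseteq Z(R)$, we still have
\[
K'\cap L'=K\cap L\cap Z(R)=I.
\]
We also need $K'$ and $L'$ to strictly contain $I$. If, say, $K'=I$, then $I=K\cap Z(R)$ with both $K$ and $Z(R)$ strictly larger than $I$. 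So we reselect the decomposition: among all decompositions $I=K\cap L$ with $K,L\supsetneq I$, we choose one in which both $K,L\subseteq Z(R)$. To show such a choice exists, split into two cases.
\begin{itemize}
\item If $Z(R)\supsetneq I$ and $K\not\subseteq Z(R)$, write $I=(K\cap Z(R))\cap L$. Either $K\cap Z(R)\supsetneq I$, and we proceed, or $K\cap Z(R)=I$. In the latter case $I=K\cap Z(R)$ expresses $I$ as an intersection of two strictly larger ideals, one of which, $Z(R)$, lies in $Z(R)$. Iterating this on the remaining factor should terminate.
\item If $I=Z(R)$, then $I$ is a proper ideal consisting of zero-divisors, and we argue that it is prime, hence primary. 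Given $ab\in Z(R)$ with $b\notin Z(R)$, $b$ is regular, so $ab\,c=0$ for some $c\neq0$ forces $ac\cdot b=0$, hence $ac=0$ and $a\in Z(R)$. So $I$ is primary directly, and $I\notin\mathcal{F}$.
\end{itemize}
The first case is where I expect genuine difficulty. A cleaner alternative, which I would try first, is to prove directly that any $I\subseteq Z(R)$ maximal in $\mathcal{F}$ is primary, by adapting the colon-chain argument of Lemma \ref{j2}. Take $ab\in I$ with $b\notin I$. If every $(I:a^n)$ lies in $Z(R)$, stabilization gives $I=(I+(a^N))\cap(I+(b))$. Intersecting both factors with $Z(R)$ keeps them ideals, by the hypothesis that $Z(R)$ is an ideal. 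If both intersections are strictly larger than $I$, maximality of $I$ in $\mathcal{F}$ writes each as a finite intersection of irreducibles, hence $I$ as well, a contradiction. Otherwise $a^N\in I$ or $b\in I$ follows, possibly after the case analysis above.

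\textbf{Step 3.} Once every proper ideal contained in $Z(R)$ is shown to be a finite intersection of irreducible ideals contained in $Z(R)$, Lemma \ref{j2} makes each of them primary, which gives the theorem.
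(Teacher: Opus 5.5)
There is a genuine gap, and it is exactly the case you flagged: $K\cap Z(R)=I$ (or $L\cap Z(R)=I$). The step ``iterating on the remaining factor should terminate'' has nothing behind it. The remaining factor $K$ is not contained in $Z(R)$, so neither the maximality of $I$ in $\mathcal F$ nor the chain condition of Theorem~\ref{j1} says anything about it, and you are simply back at the decomposition you started from.

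Moreover, the intermediate claim in Step~3 is false: it is not true that every proper ideal inside $Z(R)$ is a finite intersection of irreducible ideals \emph{lying in} $Z(R)$. So no reselection of $K,L$ can work. Take $R=k[x,y]/(x^2,xy)$, where $Z(R)=\mathfrak m=(x,y)$, and let $I=(x,y+y^2)=\mathfrak m\cap(x,1+y)$. Since $\mathfrak m/I\cong k[y]/(1+y)$ is simple, there is no decomposition $I=K\cap L$ with $I\subsetneq K,L\subseteq Z(R)$, although $I$ is neither irreducible nor primary. In fact $I$ is not a finite intersection of primary ideals contained in $Z(R)$ at all:
the only primes inside $Z(R)$ are $(x)$ and $\mathfrak m$, and the only $(x)$-primary ideal is $(x)$.
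At $\mathfrak n=(x,1+y)$, each such primary ideal localizes to $0$ or to $R_{\mathfrak n}\cong k[y]_{(1+y)}$.
But $IR_{\mathfrak n}$ is the maximal ideal of $R_{\mathfrak n}$, which is neither.
So primary components of ideals in $Z(R)$ must in general leave $Z(R)$, and there Lemma~\ref{j2} gives you nothing.

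Your ``cleaner alternative'' hits the same wall. With $a=1+y$ (regular) and $b=y$, one has $ab\in I$, $(I:a^n)=\mathfrak m\subseteq Z(R)$ for all $n$, and $(I+(a))\cap Z(R)=I$, yet neither $a^N\in I$ nor $b\in I$. The case where some $(I:a^n)$ contains a regular element is also not addressed, since irreducibility of $I$ is no longer available. The paper's proof follows your outline and simply asserts $I\subsetneq K\cap Z(R)$, so it does not close this gap either.

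What does close it is a structural fact: a $Z$-Noetherian ring is either Noetherian or a domain.
If $Z(R)\neq\{0\}$, pick $0\neq x\in Z(R)$. Then $Rx\subseteq Z(R)$ and $\operatorname{Ann}(x)\subseteq Z(R)$. Hence every submodule of $Rx\cong R/\operatorname{Ann}(x)$, and every submodule of $\operatorname{Ann}(x)$, is a finitely generated ideal. Both are therefore Noetherian $R$-modules, so $R$ is Noetherian, and Lasker--Noether applies to every proper ideal.
If $Z(R)=\{0\}$, then $R$ is a domain and the only proper ideal in $Z(R)$ is $(0)$, which is prime.
This proves the theorem, and does not even need $Z(R)$ to be an ideal.
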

	
	\begin{proof}
		\noindent
		Let $E= \{\,I \subseteq Z(R) \mid I~is~not~ a~ finite~ intersection~of~ primary~ ideals \,\}$. 
		We claim that $E = \emptyset$. Suppose, to the contrary, that $E \neq \emptyset$. Since $R$ is a $Z$-Noetherian ring, by Theorem~\ref{j1}, $E$ contains a maximal element with respect to inclusion. Let $I\in E$ be such a maximal element. Clearly, $I$ is not a primary ideal. By Lemma~\ref{j2}, $I$ is not irreducible. Hence there exist proper ideals $K$ and $L$ of $R$ such that $I = K \cap L$, with $I \subsetneq K$ and $I \subsetneq L$. Set $K_1 = K \cap Z(R)$ and $L_1= L \cap Z(R)$. Since $I \subseteq Z(R)$, $K_1 \cap L_1 = (K \cap Z(R)) \cap (L \cap Z(R)) = (K \cap L) \cap Z(R) = I \cap Z(R) = I$, so $I = K_1 \cap L_1$, with $K_1, L_1 \subseteq Z(R)$. We claim that $K_1, L_1 \notin E$. Suppose, for instance, that $K_1 \in E$. Since $I \subsetneq K_1$ and $I$ is maximal in $E$, it follows that $K_1 = I$, a contradiction. Similarly, $L_1 \notin E$. Then  $L_1$ and $K_1$ is a finite intersection of primary ideals. Since $I = K_1 \cap L_1$, it follows that $I$ is also a finite intersection of primary ideals, which contradicts the fact that $I \in E$. Therefore, $E = \emptyset$. Hence, every ideal $I \subseteq Z(R)$ can be written as a finite intersection of primary ideals. 
	\end{proof}
	
	\begin{eg}
		Let $k$ be a field and let $R = k[x,y]/(x^2,xy)$. Since $\bar{x}^2 = 0$, the element $\bar{x}$ is nilpotent, and  $\text{Nil}(R) = (\bar{x})$. Here $Z(R) = \{f(\bar y) + a\bar{x} \mid f(0) = 0\}$. Moreover, $Z(R) = (\bar{x},\bar{y})$ is an ideal of $R$, and $Z(R) \neq \text{Nil}(R)$. 
	\end{eg} 
	
	Recall that a multiplicatively closed subset of $R$ is a subset $S$ of $R$ such that $1\in S$ and $S$ is closed under multiplication. We next investigate the behavior of $Z$-Noetherian rings under localization and establish that this property is preserved under suitable conditions.
	
	\begin{theorem}
		Let $R$ be a $Z$-Noetherian ring and let $S\subseteq R$ be a multiplicatively closed set such that $S\cap Z(R)=\emptyset$. Then $S^{-1}R$ is $Z$-Noetherian.
	\end{theorem}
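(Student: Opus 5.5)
Let $T=S^{-1}R$, so $S$ consists of regular elements and the canonical map $\lambda:R\to T$, $r\mapsto r/1$, is injective. I would take an ideal $J\subseteq Z(T)$, pull it back to $I=\lambda^{-1}(J)=\{r\in R \mid r/1\in J\}$, show $I\subseteq Z(R)$, and then use the $Z$-Noetherian hypothesis on $R$ to get $I$ finitely generated. Since every ideal of $T$ satisfies $J=S^{-1}(\lambda^{-1}(J))$ (if $r/s\in J$ then $r/1=(s/1)(r/s)\in J$), the images of the generators of $I$ would then generate $J$. By Theorem \ref{j1} this proves that $T$ is $Z$-Noetherian.

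The key step is the inclusion $I\subseteq Z(R)$, which amounts to showing $Z(T)\cap\lambda(R)\subseteq\lambda(Z(R))$. Let $r\in I$, so $r/1\in Z(T)$. Then there is $b/t\neq 0$ in $T$ with $(r/1)(b/t)=0$. This means $u\,rb=0$ for some $u\in S$.

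Because $u$ is regular, $u\,rb=0$ forces $rb=0$. Likewise $b/t\neq 0$ gives $b\neq 0$, since if $b=0$ the fraction is zero. Hence $r$ kills the nonzero element $b$, so $r\in Z(R)$. This is where the hypothesis $S\cap Z(R)=\emptyset$ is used, both to cancel $u$ and to make $\lambda$ injective. I expect this to be the only real point; without the hypothesis, an element could become a zero-divisor only after localization, and the pullback would escape $Z(R)$.

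Finally, $R$ is $Z$-Noetherian and $I\subseteq Z(R)$, so $I=(a_1,\dots,a_n)$. Any $x=r/s\in J$ has $r\in I$, so $r=\sum c_ia_i$ and $x=\sum (c_i/s)(a_i/1)$. Thus $J=(a_1/1,\dots,a_n/1)$ is finitely generated, and $S^{-1}R$ is $Z$-Noetherian.
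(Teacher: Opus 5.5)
Your proof is correct and is essentially the paper's argument: contract $J$ to $I=\{r\in R\mid r/1\in J\}$, use regularity of $u\in S$ to pass from $urb=0$ to $rb=0$ (so $I\subseteq Z(R)$), then push the finite generating set of $I$ forward to $J=S^{-1}I$. The only difference is that the paper first proves the full equality $Z(S^{-1}R)=S^{-1}Z(R)$, while you prove directly just the inclusion needed for elements $r/1$; also, citing Theorem \ref{j1} is unnecessary, since the definition alone suffices.
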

	\begin{proof}
		\noindent
		Assume that $R$ is $Z$-Noetherian and $S\subseteq R$ is multiplicatively closed with $S\cap Z(R)=\varnothing$. To show that $S^{-1}R$ is $Z$--Noetherian, it suffices to prove that every ideal of $S^{-1}R$ contained in $Z(S^{-1}R)$ is finitely generated. We claim that $Z(S^{-1}R)=S^{-1}Z(R)$. 
		Let $\frac{a}{s}\in S^{-1}Z(R)$. Then $a\in Z(R)$, so there exists $b\neq 0$ in $R$ with $ab=0$. Hence $\frac{a}{s}\cdot\frac{b}{1}=0$ in $S^{-1}R$, and therefore
		$\frac{a}{s}\in Z(S^{-1}R)$. Thus $S^{-1}Z(R)\subseteq Z(S^{-1}R)$. Conversely, let $\frac{a}{s}\in Z(S^{-1}R)$. Then there exists $\frac{b}{t}\neq 0$ in $S^{-1}R$
		such that $\frac{a}{s}\cdot\frac{b}{t}=0$, and hence $uab=0$ for some $u\in S$. Since $S\cap Z(R)=\varnothing$, $u$ is not a zero divisor, so $ab=0$, which implies $a\in Z(R)$. Thus $\frac{a}{s}\in S^{-1}Z(R)$, and the equality follows. Now let $J$ be an ideal of $S^{-1}R$ such that $J\subseteq Z(S^{-1}R)$. Define the contraction $I=\{\,a\in R\mid \frac{a}{1}\in J\,\}$. Then $I$ is an ideal of $R$ and $J=S^{-1}I$. Since $J\subseteq S^{-1}Z(R)$, we have $I\subseteq Z(R)$. As $R$ is $Z$--Noetherian, the ideal $I$ is finitely generated, say $I=(a_1,\dots,a_n)$. Hence $J=S^{-1}I=(\frac{a_1}{1},\dots,\frac{a_n}{1})$, and therefore, it $J$ is finitely generated. Thus every ideal $S^{-1}R$ contained in $Z(S^{-1}R)$ is finitely generated and consequently $S^{-1}R$ is $Z$--Noetherian.
	\end{proof}
	
	\begin{deff}\cite{eg71}
		A ring $R$ is called a \textit{ZD-ring} if, for every ideal $I$ of $R$, the set of zero divisors on $R/I$, $Z_R(R/I)=\{\, r\in R \mid r(a+I)=0 \text{ for some } a\in R\setminus I \,\}$ is a finite union of prime ideals of $R$.
	\end{deff}
	
	Next, we establish a Cohen-like theorem for $Z$-Noetherian rings as follows:
	
	\begin{theorem}[\textbf{Cohen-type theorem for Z-Noetherian rings}]\label{z3}
		Let $R$ be a ZD-ring. Then $R$ is $Z$-Noetherian if and only if every prime ideal $P$ of $R$ with $P\subseteq Z(R)$ is finitely generated.
	\end{theorem}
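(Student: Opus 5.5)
The plan is to follow the classical proof of Cohen's theorem. Zorn's lemma will produce a maximal counterexample, and we then show that this maximal counterexample is prime. The ZD-hypothesis will be used to keep every auxiliary ideal inside $Z(R)$, which is where the standard argument needs extra care.

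The forward implication is immediate from Definition \ref{z1}, since a prime ideal contained in $Z(R)$ is in particular an ideal contained in $Z(R)$. For the converse, assume every prime $P\subseteq Z(R)$ is finitely generated. Suppose the family
\[
\mathcal{F}=\{I \mid I\subseteq Z(R),\ I \text{ not finitely generated}\}
\]
is nonempty. A chain in $\mathcal{F}$ has as its union an ideal contained in $Z(R)$. This union is not finitely generated, because finitely many generators would already lie in one member of the chain. Hence Zorn's lemma gives a maximal element $I\in\mathcal{F}$. Since $I\ne R$, it suffices to show that $I$ is prime: then $I$ is a prime contained in $Z(R)$, which is finitely generated by hypothesis, a contradiction.

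Before doing so I would record the finiteness of $Z(R)$. Applying the ZD-condition to the ideal $(0)$ gives $Z(R)=Z_R(R/(0))=P_1\cup\dots\cup P_n$ for primes $P_i$, which we may take to be contained in $Z(R)$. By prime avoidance (\cite[Proposition 1.11]{fm69}), every ideal contained in $Z(R)$ lies in some $P_i$. In particular $I\subseteq P_i$ for some $i$.

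Now suppose $ab\in I$ with $a,b\notin I$. Since $ab\in P_i$, one factor lies in $P_i$; say $a\in P_i$, so that $I+(a)\subseteq P_i\subseteq Z(R)$. This ideal properly contains $I$, so by maximality it is finitely generated. Write $I+(a)=(x_1+r_1a,\dots,x_m+r_ma)$ with $x_j\in I$, and set $J=(I:a)$, which contains $I+(b)\supsetneq I$. If $J\subseteq Z(R)$, then $J$ is finitely generated by maximality. The usual computation then gives
\[
I=(x_1,\dots,x_m)+aJ .
\]
Indeed, for $y\in I$, write $y=\sum s_j(x_j+r_ja)$; then $a\sum s_jr_j\in I$, so $\sum s_jr_j\in J$. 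Consequently $I$ is finitely generated, a contradiction, and $I$ is prime.

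The main obstacle is guaranteeing $J=(I:a)\subseteq Z(R)$. The ZD-condition applied to $I$ gives $J\subseteq Z_R(R/I)$, which is a finite union of primes, but not obviously $J\subseteq Z(R)$. If instead $J$ contains a regular element $y$, I would imitate Case 2 of Lemma \ref{j2}. Since $ya\in I$, we have $a\in(I:y)$. Moreover $(I:y)\subseteq Z(R)$, because $zy\in I\subseteq Z(R)$ with $y$ regular forces $z\in Z(R)$. Thus $(I:y)\supsetneq I$ and $(I:y)$ is finitely generated. I would then try to express $I$ through $(I:y)$ and the finitely generated ideal $I+(y)\cap(\cdot)$, or replace $a$ by a suitable element, to reach the same finite-generation contradiction. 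If this fails, I would exploit the freedom to choose which of $a,b$ plays the colon role, together with the finite prime decomposition of $Z_R(R/I)$ and prime avoidance, so that the colon ideal lands inside a single $P_i$.
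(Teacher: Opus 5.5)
\textbf{The open case cannot be closed, because the statement is false.} The case you leave open, where $(I:a)$ contains a regular element, is not a technicality: the converse implication fails in general.

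Let $V$ be a valuation domain whose maximal ideal $\mathfrak m$ is not finitely generated (e.g.\ value group $\mathbb{Q}$), let $k$ be a field, and put $R=V\times k$.
\begin{itemize}
\item $R$ is a ZD-ring. Every ideal of $R$ has the form $J_1\times J_2$, and one checks that $Z_R(R/(J_1\times J_2))=(Z_V(V/J_1)\times k)\cup(V\times Z_k(k/J_2))$. For proper $J_1$, the set $Z_V(V/J_1)$ is a prime ideal. It is closed under addition because witnesses are comparable: if $r_1x_1,r_2x_2\in J_1$ with $x_1,x_2\notin J_1$ and, say, $x_1\in x_2V$, then $(r_1+r_2)x_1\in J_1$. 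Its complement is clearly multiplicative.
\item The hypothesis holds. We have $Z(R)=(0\times k)\cup(V\times 0)$. Since $(p,1)$ is regular for $0\neq p\in V$, the only primes contained in $Z(R)$ are $0\times k=R(0,1)$ and $V\times 0=R(1,0)$, both principal.
\item The conclusion fails: $\mathfrak m\times 0\subseteq Z(R)$ is not finitely generated.
\end{itemize}

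\textbf{Where your argument breaks.} In this ring the unique maximal element of your family $\mathcal F$ is $I=\mathfrak m\times 0$, and $I$ is not prime. So no argument showing that the maximal counterexample is prime can succeed. Concretely, take $a=(1,0)$ and $b=(0,1)$. Then $ab=0$ and $I+(a)=V\times 0$ is finitely generated, but $(I:a)=\mathfrak m\times k$ contains the regular element $(p,1)$. The Case~2 device from Lemma \ref{j2} only yields $(I:(p,1))=V\times 0$, which says nothing about $I$. Swapping roles fails as well, since $I+(b)=\mathfrak m\times k\not\subseteq Z(R)$.

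Everything before that point in your proposal is fine: Zorn's lemma, $Z(R)=P_1\cup\dots\cup P_n$ from the ZD-condition on $(0)$, prime avoidance, and the identity $I=(x_1,\dots,x_m)+a(I:a)$ when $(I:a)\subseteq Z(R)$.

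\textbf{The paper's proof has the same problem elsewhere.} It takes a component $P_i\supseteq P$ of $Z_R(R/P)$ and concludes $P=P_i$ ``by maximality''. This is invalid, because $P_i$ need not belong to $\mathcal F$. In the example, $Z_R(R/I)=(\mathfrak m\times k)\cup(V\times 0)$: one component is finitely generated and the other is not contained in $Z(R)$.

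Some extra hypothesis is needed before your argument, or any other, can work. For instance, one guaranteeing $(I:a)\subseteq Z(R)$ whenever $I+(a)\subseteq Z(R)$ would let your argument go through unchanged.
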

	\begin{proof}
		\noindent
		Assume first that $R$ is $Z$--Noetherian. Then every ideal $I$ of $R$ with $I\subseteq Z(R)$ is finitely generated. Hence, every prime ideal $P\subseteq Z(R)$ is finitely generated. Conversely, suppose that every prime ideal $P$ of $R$ with $P\subseteq Z(R)$ is finitely generated. Assume that $R$ is not $Z$--Noetherian. Let $\mathcal{F}=\{ I\subseteq Z(R)\mid I ~is ~non ~finitely ~generated ~ideal ~of ~R\}$. Then $\mathcal{F}\neq\varnothing$. By Zorn's Lemma, $\mathcal{F}$ has a maximal element $P$.
		Since $R$ is a ZD--ring, the set $Z_R(R/P)$ is a finite union of prime ideals of $R$, say $Z_R(R/P)=\bigcup_{i=1}^n P_i$, where each $P_i$ is a prime ideal of $R$.
		We show that $P\subseteq Z_R(R/P)$. Let $p\in P$. Since $p\in Z(R)$, there exists $r\in R\setminus\{0\}$ such that $pr=0$. If $r\in P$, then $r+P=0$ in $R/P$. Otherwise, $r+P\neq 0$ in $R/P$, and in either case we have $p(r+P)=0$ in $R/P$. Hence $p\in Z_R(R/P)$, and therefore
		$P\subseteq Z_R(R/P)$. Thus $P\subseteq P_i$ for some $i$. By the maximality of $P$ in $\mathcal{F}$, we must have $P=P_i$. Hence $P$ is a prime ideal of $R$ contained in $Z(R)$. By hypothesis, $P$ is finitely generated, which contradicts the choice of $P$. Therefore $R$ is $Z$--Noetherian.
	\end{proof}
	
	\begin{theorem}[\textbf{Krull intersection section type theorem}]\label{z5}
		Let $R$ be a $Z$-Noetherian ring such that $Z(R)$ is an ideal of $R$ and $I$ is an ideal of $R$ such that $rad(I) \subseteq Z(R)$. Let $J=\bigcap_{n=1}^{\infty} I^{n}$. Then there exists an element $r\in R$ with $r\cong 1(mod (I))$ such that $rJ=0$.
	\end{theorem}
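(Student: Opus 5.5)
The plan is the classical Krull intersection argument: show $IJ = J$ using the primary decomposition of Theorem~\ref{sp}, then finish with the determinant trick. The hypotheses supply the finiteness needed at each step. Since $I \subseteq \operatorname{rad}(I) \subseteq Z(R)$ and $R$ is $Z$-Noetherian, $I$ is finitely generated. Also $J = \bigcap_{n} I^n \subseteq I \subseteq Z(R)$, so $J$ is finitely generated, say $J = (x_1,\dots,x_m)$. Finally, $IJ \subseteq Z(R)$, and since $Z(R)$ is an ideal it does not contain $1$, so $IJ$ is a proper ideal contained in $Z(R)$.

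\emph{Step 1: $J = IJ$.} Only $J \subseteq IJ$ needs proof. By Theorem~\ref{sp}, applicable because $Z(R)$ is an ideal, we can write $IJ = Q_1 \cap \cdots \cap Q_s$ with each $Q_i$ primary. It suffices to show $J \subseteq Q_i$ for each $i$, and we split into two cases.

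\begin{itemize}
\item If $I \subseteq \operatorname{rad}(Q_i)$, then each of the finitely many generators of $I$ has a power lying in $Q_i$. Hence $I^{n} \subseteq Q_i$ for $n$ large, and so $J \subseteq I^{n} \subseteq Q_i$.
\item If $I \not\subseteq \operatorname{rad}(Q_i)$, pick $a \in I \setminus \operatorname{rad}(Q_i)$. For every $x \in J$ we have $ax \in IJ \subseteq Q_i$. Since no power of $a$ lies in $Q_i$, the primary property forces $x \in Q_i$. Hence $J \subseteq Q_i$.
\end{itemize}

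Therefore $J \subseteq \bigcap_i Q_i = IJ$.

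\emph{Step 2: the determinant trick.} From $J = IJ$, write $x_i = \sum_{j} a_{ij} x_j$ with $a_{ij} \in I$. Put $A = (a_{ij})$. Then $(E - A)\,\mathbf{x} = 0$, where $E$ is the identity matrix and $\mathbf{x} = (x_1,\dots,x_m)^{T}$. Multiplying on the left by the adjugate of $E - A$ gives $\det(E - A)\, x_j = 0$ for all $j$. Set $r = \det(E - A)$. Expanding the determinant shows $r \equiv 1 \pmod{I}$, and $rJ = 0$ as required.

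The main obstacle is Step 1. It is the only place where Theorem~\ref{sp} is used, and it depends on two facts: that $IJ$ lies inside $Z(R)$ and is proper, and that $I$ is finitely generated, so that $I \subseteq \operatorname{rad}(Q_i)$ yields a single power $I^n \subseteq Q_i$. Both follow from the hypothesis $\operatorname{rad}(I) \subseteq Z(R)$, which is exactly why that hypothesis appears in the statement.
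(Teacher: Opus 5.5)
Your proposal is correct and follows the paper's proof essentially step for step. You obtain $J = IJ$ from the primary decomposition of $IJ$ given by Theorem~\ref{sp}, using the same two-case split on whether $I \subseteq \operatorname{rad}(Q_i)$ (with finite generation of $I$ supplying a single power $I^n \subseteq Q_i$), and you then finish with the determinant trick, which is precisely the content of the Atiyah--Macdonald Corollary~2.5 that the paper cites.
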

	
	\begin{proof}
		\noindent
		First, note that $J = \bigcap_{n=1}^\infty I^n \subseteq I$, and hence $IJ \subseteq I^2 \subseteq I \subseteq Z(R)$. Thus $B = IJ \subseteq Z(R)$. Since $B \subseteq Z(R)$ and $R$ is $Z$-Noetherian, by Theorem 2.9, the ideal $B$ admits a primary decomposition: $B = B_1 \cap B_2 \cap \dots \cap B_k$, where each $B_i$ is $P_i$-primary and $P_i = \text{rad}(B_i)$. Fix $i$ ($1 \le i \le k$). Observe that $IJ = B \subseteq B_i$. As $B_i$ is $P_i$-primary, either $J \subseteq B_i$ or $I \subseteq P_i$.
		
		\begin{itemize}
			\item \textbf{Case 1: $I \subseteq P_i$.} Since $\text{rad}(I) \subseteq Z(R)$ and $R$ is $Z$-Noetherian, the ideal $I$ is contained in $Z(R)$ and is therefore finitely generated. Write $I =(x_1, x_2, \dots, x_l )$. For each generator $x_j \in I$, since $I \subseteq P_i = \text{rad}(B_i)$, there exists an integer $n_j \in \mathbb{N}$ such that $x_j^{n_j} \in B_i$. Set $m = \sum_{j=1}^l (n_j - 1) + 1$. Then, by the Pigeonhole Principle applied to the generators, $I^m \subseteq B_i$. Consequently, $J = \bigcap_{n=1}^\infty I^n \subseteq I^m \subseteq B_i$.
			
			\item \textbf{Case 2: $I \not\subseteq P_i$.} There exists an element $a \in I \setminus P_i$. Since $aJ \subseteq IJ = B \subseteq B_i$
			and $B_i$ is $P_i$-primary, by the definition of primary ideals, we have $J \subseteq B_i$.
		\end{itemize}
		Combining both cases, we conclude that $J \subseteq B_i$ for all $1 \le i \le k$. Hence, $J \subseteq \bigcap_{i=1}^k B_i = B = IJ$. Since it is trivially true that $IJ \subseteq J$, we obtain the equality $J = IJ$. Note that $J$ is finitely generated because $J \subseteq Z(R)$ and $R$ is a $Z$-Noetherian ring. Finally, \cite[Corollary 2.5]{fm69}, there exists an element $r\in R$ with $r\cong 1(mod (I))$ such that $rJ = 0$. This completes the proof.
	\end{proof}
	Now, we prove the Hilbert basis theorem for $Z$-Noetherian rings. First, we recall McCoy's Theorem as follows:
	\begin{lem}\cite[Theorem 2]{nh42}\label{am1}
		Let $R$ be a commutative ring. An element $f(x)$ of $R[x]$ is a zero divisor in $R[x]$, then there exists a non-zero element $c \in R$ such that $cf(x) = 0$.
	\end{lem}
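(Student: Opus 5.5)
We prove McCoy's Theorem as stated in Lemma~\ref{am1}: if $f(x)=a_0+a_1x+\dots+a_nx^n\in R[x]$ is a zero divisor, there is $c\neq 0$ in $R$ with $cf(x)=0$. The argument picks a nonzero annihilating polynomial $g(x)$ of minimal degree and shows that $\deg g=0$.

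First, choose $g(x)=b_0+b_1x+\dots+b_mx^m\neq 0$ with $f(x)g(x)=0$ and $m$ minimal among all such nonzero annihilators, so $b_m\neq 0$. If $m=0$ we are done with $c=b_0$. Assume $m\ge 1$ and aim for a contradiction.

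The key claim is that $a_ig(x)=0$ for every $i$. We prove it by descending induction on $i$, from $i=n$ down to $i=0$. Comparing the top coefficient of $fg$ gives $a_nb_m=0$. Hence $a_ng(x)$ has degree $<m$. Since $f\cdot a_ng=a_n(fg)=0$, minimality of $m$ forces $a_ng=0$. Now suppose $a_jg=0$ for all $j>i$. Then
\[ 0=f g=(a_0+\dots+a_ix^i)g(x). \]
The coefficient of $x^{i+m}$ on the right is $a_ib_m$, so $a_ib_m=0$. Therefore $a_ig$ has degree $<m$ and still annihilates $f$, and minimality gives $a_ig=0$. This completes the induction.

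Finally, $a_ig=0$ for all $i$ implies $a_ib_m=0$ for all $i$, so $b_mf(x)=0$. The element $b_m$ is a nonzero constant, i.e.\ a polynomial of degree $0<m$ annihilating $f$. This contradicts minimality, so $m=0$ and $c=b_0$ works; equivalently, we may take $c=b_m$ directly.

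The main obstacle is organizing the induction so that the truncation $f\mapsto a_0+\dots+a_ix^i$ correctly exposes $a_ib_m$ as a top coefficient. At each step we must also check that $a_ig$ really lowers the degree. We need not show $a_ig\neq 0$ in order to invoke minimality: minimality only applies to nonzero polynomials, so a lower-degree annihilator must be $0$, which is exactly the claim.
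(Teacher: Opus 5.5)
Your proof is correct. It is the standard minimal-degree argument for McCoy's theorem, with the usual ``let $r$ be the largest index with $a_r g\neq 0$'' contradiction recast as a descending induction. Each step checks out: the coefficient of $x^{i+m}$ in $(a_0+\dots+a_ix^i)g$ is exactly $a_ib_m$, and minimality forces the lower-degree annihilator $a_ig$ to vanish.

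The paper gives no proof of this lemma; it simply cites it from McCoy's paper. Your argument therefore supplies a self-contained justification of the fact used later in the Hilbert Basis Theorem. Taking $c=b_m$ directly, as you note, removes the need for the auxiliary hypothesis $m\ge 1$.
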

	
	\begin{theorem}[\textbf{Hilbert Basis Theorem  for $Z$-Noetherian Rings}]\label{z4}
		If $R$ is a $Z$-Noetherian ring, then the polynomial ring $R[x]$ is also $Z$-Noetherian.
		\noindent
	\end{theorem}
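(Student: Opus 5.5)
The plan is to run the classical proof of Hilbert's Basis Theorem via leading-coefficient ideals. The only new ingredient is a check that every leading-coefficient ideal attached to an ideal of $R[x]$ inside $Z(R[x])$ lies in $Z(R)$; Lemma \ref{am1} (McCoy's Theorem) provides this. So let $J$ be an ideal of $R[x]$ with $J\subseteq Z(R[x])$; we must show $J$ is finitely generated. For each $n\ge 0$ set
\[
L_n(J)=\{0\}\cup\{a\in R \mid a \text{ is the leading coefficient of some } f\in J \text{ with } \deg f=n\},
\]
and $L(J)=\bigcup_{n\ge 0}L_n(J)$. As in the Noetherian case, each $L_n(J)$ is an ideal of $R$. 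Multiplication by $x$ shows $L_0(J)\subseteq L_1(J)\subseteq\cdots$, so $L(J)$ is also an ideal of $R$.

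\textbf{Key step.} We show $L(J)\subseteq Z(R)$. Let $a\neq 0$ be the leading coefficient of some $f\in J$. Since $f\in Z(R[x])$, Lemma \ref{am1} gives a nonzero $c\in R$ with $cf(x)=0$. Comparing top coefficients gives $ca=0$, so $a\in Z(R)$. Hence every $L_n(J)$, and $L(J)$ itself, is an ideal of $R$ contained in $Z(R)$.

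Since $R$ is $Z$-Noetherian, $L(J)$ is finitely generated. Choose generators $a_1,\dots,a_k$ of $L(J)$ and polynomials $f_1,\dots,f_k\in J$ with leading coefficients $a_i$. Let $N=\max\deg f_i$. Each of $L_0(J),\dots,L_{N-1}(J)$ is also contained in $Z(R)$, hence finitely generated. For each $m<N$, choose finitely many $g_{m,1},\dots,g_{m,s_m}\in J$ of degree $m$ whose leading coefficients generate $L_m(J)$. (Alternatively, Theorem \ref{j1} applied to the chain $(L_n(J))$ yields an index after which the chain stabilizes.) Let $J'$ be the ideal generated by all the $f_i$ and $g_{m,j}$; clearly $J'\subseteq J$.

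\textbf{Reverse inclusion.} We prove $J\subseteq J'$ by induction on $\deg f$ for $f\in J$, exactly as in the classical proof. If $\deg f=d\ge N$ with leading coefficient $a=\sum r_ia_i$, then $f-\sum r_ix^{d-\deg f_i}f_i$ lies in $J$ and has degree $<d$. If $d<N$, subtract the corresponding combination of the $g_{d,j}$. In either case the difference stays in $J$, so the induction closes and $J=J'$ is finitely generated. Thus $R[x]$ is $Z$-Noetherian.

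The main obstacle is the key step: the hypothesis only controls ideals of $R$ inside $Z(R)$, so we must know that leading coefficients of zero-divisors in $R[x]$ are zero-divisors in $R$. McCoy's Theorem settles this directly. Once it is in place, the remainder is the standard argument with no further dependence on $Z(R)$ being an ideal.
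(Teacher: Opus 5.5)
Your proof is correct and follows essentially the same route as the paper: leading-coefficient ideals $L_n(J)$, then McCoy's theorem to get $\bigcup_n L_n(J)\subseteq Z(R)$, then the classical degree-reduction induction. The only difference is cosmetic: you bound degrees by $N=\max\deg f_i$ (Atiyah--Macdonald style), whereas the paper uses the index $m$ at which the chain $L_0\subseteq L_1\subseteq\cdots$ stabilizes.
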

	\begin{proof}
		Let $J$ be an ideal of $R[x]$ such that $J \subseteq Z(R[x])$. We show that $J$ is finitely generated.
		For each $i \ge 0$, define $L_i$ to be the set consisting of $0$ and the leading coefficients of all polynomials in $J$ of degree $i$. Then each $L_i$ is an ideal of $R$. Moreover, since $f(x) \in J$ implies $x f(x) \in J$, we obtain an ascending chain
		\begin{center}
			$
			L_0 \subseteq L_1 \subseteq L_2 \subseteq \cdots.
			$
		\end{center}
		Let $L = \bigcup_{i=0}^{\infty} L_i$. Then $L$ is an ideal of $R$.
		First we show that  $L \subseteq Z(R)$.
		Let $a \in L$, $a \neq 0$. Then $a$ is the leading coefficient of some $f(x) \in J$. Since $J \subseteq Z(R[x])$, $f(x)$ is a zero divisor in $R[x]$. By Lemma \ref{am1}, there exists $0 \neq c \in R$ such that $c f(x) = 0$. Hence $c a = 0$, so $a \in Z(R)$. Thus $L \subseteq Z(R)$.
		Since $R$ is $Z$-Noetherian and $L \subseteq Z(R)$, the ideal $L$ is finitely generated.
		Since $L = \bigcup L_i \subseteq Z(R)$, it follows that each $L_i \subseteq Z(R)$. Since $L$ is finitely generated and $L = \bigcup L_i$, all generators lie in some $L_m$, hence: $L = L_m$.
		Hence the chain stabilizes. Then there exists $m$ such that
		$L_m = L_{m+1} = \cdots = L$.
		Each $L_i \subseteq Z(R)$, so each $L_i$ is finitely generated.
		For each $i = 0,1,\dots,m-1$,  construct the  generators. Suppose
		$L_i = (a_{i1}, a_{i2}, \dots, a_{ik_i})$, and choose $f_{ij}(x) \in J$ of degree $i$ with leading coefficient $a_{ij}$.
		Similarly, let $L_m = (a_{m1}, a_{m2}, \dots, a_{mk_m})$, 
		and choose $f_{mj}(x) \in J$ of degree $m$ with leading coefficient $a_{mj}$. Let $J'$ be the ideal generated by all $f_{ij}(x) ,~ 0 \leq i \leq m,\quad 1 \leq j \leq k_i$. Then $J' \subseteq J$.
		
		\noindent Now we prove that $J = J'$ by induction on the degree $d$ that every $f(x) \in J$ lies in $J'$.
		
		\noindent\textit{\textbf{Case 1: $d \ge m$.}}  
		Let $a$ be the leading coefficient of $f(x)$. Then $a \in L_d = L_m$, so $a = \sum_{j=1}^{k_m} r_j a_{mj}$. Define $g(x) = \sum_{j=1}^{k_m} r_j x^{d-m} f_{mj}(x)$. Then $g(x) \in J'$ and $\deg g = d$, and its leading coefficient is $a$. Hence $f(x) - g(x) \in J$ has degree $< d$.
		
		\noindent\textit{\textbf{Case 2:} $d<m$.}  
		Let $a$ be the leading coefficient of $f(x)$. Then $a \in L_d$, so
		$a = \sum_{j=1}^{k_d} r_j a_{dj}$. Define $g(x) = \sum_{j=1}^{k_d} r_j f_{dj}(x)$. Then $g(x) \in J'$, $\deg g = d$, and has leading coefficient $a$. Hence $f(x) - g(x) \in J$ has degree $< d$. In both cases, repeated reduction yields $f(x) \in J'$. Thus $J = J'$. Since $J \subseteq Z(R[x])$ was arbitrary and shown to be finitely generated, it follows that every ideal contained in $Z(R[x])$ is finitely generated. Therefore $R[x]$ is $Z$-Noetherian.
	\end{proof}
	
	\noindent
	Following from \cite{wf16}, \textbf{Krull's Principal Ideal Theorem} states that if $R$ is a Noetherian ring and $a\in R$ is a nonzero nonunit, then every minimal prime ideal $\mathfrak{p}$ over $(a)$ satisfies $\operatorname{ht}(\mathfrak{p})\leq 1$, where $\operatorname{ht}(\mathfrak{p})$ denotes the supremum of the lengths of chains of prime ideals $\mathfrak{p}_0\subsetneq\mathfrak{p}_1\subsetneq\cdots\subsetneq\mathfrak{p}_n=\mathfrak{p}$. Moreover, if $a$ is not a zero-divisor of $R$, then $\operatorname{ht}(\mathfrak{p})=1$. We now establish the corresponding result for $Z$-Noetherian rings.
	
	\begin{theorem}[Krull's Principal Ideal Theorem for $Z$-Noetherian Rings]\label{z6}
		\label{thm:PIT-Znoetherian}
		Let $R$ be a  $Z$-Noetherian ring and  let $x \in R$ be a nonzero nonunit element of $R$. if $P$ is minimal prime over $(x)$ with $P\subseteq Z(R)$. Then
		\begin{enumerate}
			\item[(i)] $R_P$ is Noetherian.\label{hagnu}
			\item[(ii)] $\operatorname{ht}(P) \leq 1$.
		\end{enumerate}
	\end{theorem}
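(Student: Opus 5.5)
The plan is to prove (i) by showing directly that every ideal of $R_P$ is finitely generated, using only the $Z$-Noetherian hypothesis together with $P\subseteq Z(R)$. Then (ii) will follow from the classical Krull Principal Ideal Theorem applied in the Noetherian local ring $R_P$. Note that, unlike the localization theorem proved earlier, $S=R\setminus P$ may meet $Z(R)$ here. So I will not try to pass through $Z(R_P)$; I will work with contractions instead.

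For (i), let $\mathfrak{a}$ be an ideal of $R_P$ and set $I=\{a\in R \mid \frac{a}{1}\in\mathfrak{a}\}$, the contraction of $\mathfrak{a}$ to $R$. As in the proof of the localization theorem above, $I$ is an ideal of $R$ and $\mathfrak{a}=I R_P$, since $\frac{a}{s}\in\mathfrak{a}$ forces $\frac{a}{1}\in\mathfrak{a}$.

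If $\mathfrak{a}=R_P$ there is nothing to prove, so assume $\mathfrak{a}$ is proper. The key observation is that $I\subseteq P$. Indeed, if $a\in I\setminus P$, then $\frac{a}{1}$ is a unit of $R_P$ lying in $\mathfrak{a}$, which contradicts properness. Hence $I\subseteq P\subseteq Z(R)$. Since $R$ is $Z$-Noetherian, $I$ is finitely generated, say $I=(a_1,\dots,a_n)$, and then $\mathfrak{a}=(\frac{a_1}{1},\dots,\frac{a_n}{1})$. Thus every ideal of $R_P$ is finitely generated, and $R_P$ is Noetherian. Equivalently, one could use Theorem~\ref{j1}: an ascending chain of proper ideals of $R_P$ contracts to an ascending chain of ideals of $R$ contained in $P\subseteq Z(R)$, which stabilizes, and extending back stabilizes the original chain.

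For (ii), recall that primes of $R_P$ correspond to primes of $R$ contained in $P$, so $\operatorname{ht}(P)=\operatorname{ht}(PR_P)$. Since $P$ is minimal over $(x)$, the ideal $PR_P$ is a minimal prime over $(\frac{x}{1})$ in $R_P$. Moreover $\frac{x}{1}\in PR_P$, so $\frac{x}{1}$ is a nonunit.

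If $\frac{x}{1}\neq 0$, the classical Krull Principal Ideal Theorem (as recalled from \cite{wf16}) applied in the Noetherian ring $R_P$ gives $\operatorname{ht}(PR_P)\le 1$. If $\frac{x}{1}=0$, then $PR_P$ is minimal over $(0)$, hence is a minimal prime of $R_P$, so its height is $0$. In either case $\operatorname{ht}(P)\le 1$.

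The only delicate point is the containment $I\subseteq P$ for contractions of proper ideals. This is exactly what lets the hypothesis $P\subseteq Z(R)$ feed the $Z$-Noetherian condition, even though $R\setminus P$ may contain zero-divisors. Once that is in place, the rest is a formal reduction to the Noetherian case.
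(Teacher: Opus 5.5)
Your proposal is correct and follows essentially the same route as the paper. Both arguments contract a proper ideal of $R_P$ to an ideal of $R$ inside $P\subseteq Z(R)$, use the $Z$-Noetherian hypothesis to get finite generation, and then apply the principal ideal theorem in the Noetherian local ring $R_P$ together with $\operatorname{ht}(P)=\operatorname{ht}(PR_P)$. Your separate treatment of the case $\frac{x}{1}=0$, and your use of the ``height $\le 1$'' form of Krull's theorem, are slightly more careful than the paper's citation. The paper cites the version requiring a non-zero-divisor, but here $x\in P\subseteq Z(R)$, so $\frac{x}{1}$ may be a zero-divisor in $R_P$.
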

	
	\begin{proof}
		\noindent
		Let $S = R \setminus P$ be a multiplicative closed subset of $R$, and $R_P = S^{-1}R$ is a local ring with unique maximal ideal $PR_P$.
		
		\noindent\textbf{(i)}
		Let $J \subsetneq R_P$ be an ideal and  $J = S^{-1}I$ where  $I = J^{c}$ (the contraction
		of $J$ under the canonical map $R \to R_P$).
		since $J$ is proper, $I \cap S = \emptyset$, by\cite[Corollary 3.11]{fm69} and 
		$I \subseteq P$. By hypothesis $P \subseteq Z(R)$, so $I \subseteq Z(R)$. hence $I = (a_1,\dots,a_n)$ for some $a_1,\dots,a_n \in R$. Consequently, $J$ is finitely generated  hence $R_P$ is Noetherian.
		
		\noindent\textbf{(ii)}
		Since $P$ is minimal over $(x)$ in $R$. Then $PR_P$ is minimal over $(x/1)$ in $R_P$. By\ref{hagnu}, $R_P$ is a Noetherian local ring therefore $\operatorname{ht}(PR_P) \leq 1$, by \cite[Corollary 11.16]{fm69} hence
		$\operatorname{ht}_R(P)\leq 1$, by \cite[Corollary 3.13]{fm69}
	\end{proof}


\end{document}